\documentclass{amsart}

\usepackage[font=small,labelfont=bf,up,textfont=it,up]{caption}

\usepackage[english]{babel}
\usepackage{tikz,graphicx}
\usetikzlibrary{matrix,arrows}

\usepackage{amsmath, amssymb, amsthm}

\newtheorem{theorem}{Theorem}[section]

\newtheorem{cor}[theorem]{Corollary}
\theoremstyle{definition}
\newtheorem{definition}[theorem]{Definition}
\newtheorem{remark}[theorem]{Remark}

\newcommand{\qq}{\mathbb{Q}}

\newcommand{\cc}{\mathbb{C}}

\newcommand{\zz}{\mathbb{Z}}

\makeatletter
\@namedef{subjclassname@2020}{\textup{2020} Mathematics Subject Classification}
\makeatother

\begin{document}

\author{Heiko Knospe}
\title{A note on regularized Bernoulli distributions and $p$-adic Dirichlet expansions}

\email{ heiko.knospe@th-koeln.de} 


\subjclass[2020]{Primary: 11R23. Secondary: 11R42, 11S80, 11M41}
\begin{abstract}
We consider Bernoulli distributions and their regularizations, which are measures on the $p$-adic integers $\zz_p$. It is well known that their Mellin transform can be used to define 
$p$-adic $L$-functions. We show that for $p>2$ one of the regularized Bernoulli distributions is particularly simple and equal to a measure on $\zz_p$ that takes the values $\pm \frac{1}{2}$ on clopen balls. We apply this to $p$-adic $L$-functions for Dirichlet characters of $p$-power conductor and obtain Dirichlet series expansions similar to the complex case. Such expansions were studied by D. Delbourgo, and this contribution provides an approach via $p$-adic measures.
\end{abstract}

\maketitle

\section{Introduction}

$p$-adic $L$-functions are $p$-adic analogues of complex $L$-functions. They have a long history and the primary constructions going back to by Kubota-Leopoldt \cite{kubota} and Iwasawa \cite{iwasawa} are via the interpolation of special values  of $L$-functions. 
$p$-adic $L$-functions can be defined via the unbounded Haar {\em distribution} on $\zz_p^*$, which yields a Volkenborn integral. However, it is also possible to use $p$-adic {\em measures}.

Let $p>2$ be a prime number and $k \geq 0$ an integer. For $x \in \zz_p$, we denote by $\{x\}_{p^n}$ the unique representative of $x \mod p^n$ between $0$ and $p^n-1$.

The {\em Bernoulli distributions} $E_k$ on $\zz_p$ are defined by 
$$ E_k( a + p^ n \zz_p) = p^{n(k-1)} B_k \left(\frac{ \{ a \}_{p^n} } {p^n} \right) , $$
where $B_k(x)$ is the $k$-th Bernoulli polynomial and $B_k=B_k(0)$ are the Bernoulli numbers (see \cite{koblitz}). For $k=0$, $B_0 (x) = 1$ and $E_0$ is the Haar distribution. For $k=1$, one has  $B_1(x)= x - \frac{1}{2}$.
Choose $c \in \zz$ with $c \neq 1$ and $c \notin p\zz$.     
Then the {\em regularization} of $E_k$ is defined by
$$ E_{k,c} ( a + p^n \zz_p ) = E_k ( a + p^n \zz_p) - c^k E_k \left( \left\{ \frac{a}{c}  \right\}_{p^n} + p^n \zz_p \right) .$$
One shows that the regularized Bernoulli distributions $E_{k,c}$ are {\em measures}. 
Now let $\chi: \zz \rightarrow \overline{\qq} \subset \cc_p$ be a Dirichlet character of conductor $f_{\chi}=p^m$, $m\geq 0$.
We denote by $\omega$ the Teichm\"uller character modulo $p$. For $a \in \zz^*_p$, we write $\langle a \rangle = \frac{a}{\omega(a)} \in 1+ p\zz_p$. 

It is well known that the $p$-adic $L$-function $L_p(s,\chi)$ is   a Mellin-transform of $E_{1,c}$ (see \cite{washington} Theorem 12.2):
$$ L_p(s, \chi) = \frac{-1}{1- \chi(c) \langle c \rangle^{-s+1}} \int_{\zz_p^*} \chi \omega^{-1} (a) \langle a \rangle^{-s} d E_{1,c} $$
$L_p(s,\chi)$ interpolates the complex values at $s=0,-1,-2,\dots$ up to a factor, i.e., for integers $k\geq 1$ (see \cite{washington} Theorem 5.11):
$$ L_p(1-k,\chi) = -(1-\chi\omega^{-k} (p) p^{k-1}) \frac{B_{k,\chi \omega^{-k}}}{k} $$

\section{Choosing a $p$-adic measure}

The regularization of the Bernoulli distributions depends on a parameter $c$ and there seems to be no preferred choice. However, we will show below that $E_{1,c}$ is particularly simple for $c=2$.

\begin{definition} Let $ p \neq 2$ be a prime. Then 
$$ \mu(a + p^n \zz_p) = (-1)^{\{a\}_{p^n}} $$
defines a normalized measure on $\zz_p$. We call $\mu$ the {\em alternating measure}, since the value on all clopen balls is $\pm 1$.\hfill$\lozenge$ 
\end{definition}
It is very easy to verify that $\mu$ is in fact a measure. Note that we excluded $p=2$. The Theorem below shows that $\mu$ is (up to the factor $\frac{1}{2})$ equal to the regularized Bernoulli distribution $E_{1,2}$.

\begin{theorem} Let $p>2$ and $\mu$ the above alternating measure. Then $$\textstyle\frac{1}{2} \mu = E_{1,2}.$$
\label{measure}
\end{theorem}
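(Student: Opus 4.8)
The plan is to reduce everything to the values on clopen balls. Both $\tfrac12\mu$ and $E_{1,2}$ are distributions (indeed measures, as recalled above), so they coincide as soon as they agree on every ball $a+p^n\zz_p$; thus it suffices to prove $\tfrac12\mu(a+p^n\zz_p)=E_{1,2}(a+p^n\zz_p)$ for all $n\ge 0$ and $a\in\{0,\dots,p^n-1\}$. To unwind the right-hand side I would use $B_1(x)=x-\tfrac12$, the fact that $k=1$ and $c=2$ give $c^k=2$, and that $2\notin p\zz_p$ makes $a/2:=a\cdot 2^{-1}\in\zz_p$ meaningful; this gives
$$E_{1,2}(a+p^n\zz_p)=\left(\frac{\{a\}_{p^n}}{p^n}-\frac12\right)-2\left(\frac{\{a/2\}_{p^n}}{p^n}-\frac12\right)=\frac{\{a\}_{p^n}-2\{a/2\}_{p^n}}{p^n}+\frac12 .$$
Since $\mu(a+p^n\zz_p)=(-1)^{\{a\}_{p^n}}$, the whole statement reduces to the elementary identity $\{a\}_{p^n}-2\{a/2\}_{p^n}=\tfrac{p^n}{2}\big((-1)^{\{a\}_{p^n}}-1\big)$ in $\zz$.

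The key step is to evaluate $\{a/2\}_{p^n}$ in terms of $b:=\{a\}_{p^n}$, and this is exactly where the hypothesis $p\ne 2$ is used, since it makes $p^n$ odd. If $b$ is even, then $b/2\in\{0,\dots,p^n-1\}$ already represents $b\cdot 2^{-1}\bmod p^n$, so $\{a/2\}_{p^n}=b/2$, hence $\{a\}_{p^n}-2\{a/2\}_{p^n}=0$ and $E_{1,2}(a+p^n\zz_p)=\tfrac12=\tfrac12(-1)^b$. If $b$ is odd, then $b+p^n$ is even and $(b+p^n)/2$ lies in $\{1,\dots,p^n-1\}$ (here one uses $b\le p^n-2$, valid because $p^n-1$ is even), and it represents $b\cdot 2^{-1}\bmod p^n$; hence $\{a/2\}_{p^n}=(b+p^n)/2$, so $\{a\}_{p^n}-2\{a/2\}_{p^n}=b-(b+p^n)=-p^n$ and $E_{1,2}(a+p^n\zz_p)=-1+\tfrac12=-\tfrac12=\tfrac12(-1)^b$. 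In both cases $E_{1,2}(a+p^n\zz_p)=\tfrac12\mu(a+p^n\zz_p)$, which is the claim.

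The argument is short, and the only point that needs a moment of care is the range check that $(b+p^n)/2$ genuinely is the canonical representative $\{a/2\}_{p^n}\in\{0,\dots,p^n-1\}$; this step, and hence the clean formula, is precisely what breaks down for $p=2$, where $p^n$ is even — in agreement with the standing hypothesis $p>2$. As a sanity check one can also confirm that the total masses match, $E_{1,2}(\zz_p)=E_1(\zz_p)-2E_1(\zz_p)=-E_1(\zz_p)=\tfrac12=\tfrac12\mu(\zz_p)$.
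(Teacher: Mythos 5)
Your proposal is correct and follows essentially the same route as the paper: compute $E_{1,2}$ on each ball via $B_1(x)=x-\tfrac12$, split into the cases $\{a\}_{p^n}$ even and odd, and identify $\{a/2\}_{p^n}$ as $b/2$ or $(b+p^n)/2$ respectively (the paper phrases the odd case via $\{\tfrac12\}_{p^n}=\tfrac{p^n+1}{2}$, which is the same computation). The only presentational difference is your explicit range check in place of the paper's $p$-adic expansion of $\tfrac12$; both hinge on $p^n$ being odd, so the use of $p>2$ is the same.
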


\begin{proof} Let $n \geq 1$. First, we consider even representatives of $a + p^n \zz_p$.
Let $a=2b$ where $b \in \{0,1, \dots , \frac{p^n-1}{2} \}$. Then 
$$E_{1,2}(a+p^n \zz_p) = E_1(2b + p^n \zz_p) - 2 E_1(b + p^n \zz_p) = \frac{2b}{p^n} - \frac{1}{2} - 2 \left( \frac{b}{p^n} - \frac{1}{2} \right) = \frac{1}{2} .$$
Now we look at odd representatives. Let $a=2b+1$ where $b \in \{0,1, \dots , \frac{p^n-3}{2} \}$. Then we have
$$E_{1,2}(a+p^n \zz_p) = E_1(2b+1 + p^n \zz_p) - 2 E_1(b + \textstyle\frac{1}{2} + p^n \zz_p) = \frac{2b+1}{p^n} -\frac{1}{2} - 2 \left(\frac{ \{ b + \frac{1}{2} \}_{p^n} } {p^n} - \frac{1}{2} \right) .$$
Note that $\frac{1}{2}$ has the following $p$-adic representation:
$$ \frac{1}{2} = \frac{p+1}{2} +  \frac{p-1}{2} p + \frac{p-1}{2} p^2 + \dots $$ 

Hence $\{ \frac{1}{2} \}_{p^n} = \frac{p^n+1}{2}$. Since $b=\{b\}_{p^n} \leq \frac{p^n - 3}{2}$, we have $\{ \frac{1}{2} \}_{p^n} + \{b\}_{p^n} \leq p^n-1$. So 
we can add the representatives and obtain
$$ \left\{ b + \frac{1}{2} \right\}_{p^n} = b + \frac{p^n+1}{2} .$$
This implies our assertion:
$$E_{1,2}(a+p^n \zz_p) = \frac{2b+1}{p^n} -\frac{1}{2} - \frac{2b +p^n+1 }{p^n} + 1 = - \frac{1}{2} $$ 
 
\end{proof}

\section{Dirichlet series expansion}
We can use Theorem \ref{measure} to obtain a $p$-adic Dirichlet series expansion of $L_p(s,\chi)$.

\begin{cor} Let $p>2$ be a prime and $\chi$ a Dirichlet character of $p$-power conductor. Then:
$$  L_p(s, \chi) = \frac{-1}{1- \chi(2) \langle 2 \rangle^{-s+1}} \cdot  \lim_{n \rightarrow \infty} \sum_{\substack{a=1 \\ p\, \nmid\, a}}^{p^n} \frac{(-1)^a}{2} \, \chi \omega^{-1} (a) \langle a \rangle^{-s}  $$
$L_p(s,\chi)$ is analytic in $s \in \zz_p$, except a simple pole at $s=1$ for $\chi=\omega$. For $\chi=\omega^i$ and $i=0,\, 1,\, \dots,\, p-2$, we obtain the $p-1$ branches of the $p$-adic zeta function:
$$ \zeta_{p,i}(s) = L_p(s,\, \omega^{1-i}) = \frac{-1}{1-\omega(2)^{1-i} \langle 2 \rangle^{-s+1}}  \cdot  \lim_{n \rightarrow \infty} \sum_{\substack{a=1 \\ p\, \nmid\, a }}^{p^n} \frac{(-1)^a}{2} \ \omega(a)^{-i} \langle a \rangle^{-s} $$
\end{cor}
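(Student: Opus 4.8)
The plan is to read the first identity off the Mellin‑transform representation of $L_p(s,\chi)$ recalled in the introduction, specialized to the parameter $c=2$, and then to substitute Theorem~\ref{measure}. Since $p>2$, the integer $c=2$ satisfies $c\neq1$ and $c\notin p\zz$, so the formula from the introduction applies and reads
\[
L_p(s,\chi)=\frac{-1}{1-\chi(2)\langle 2\rangle^{-s+1}}\int_{\zz_p^*}\chi\omega^{-1}(a)\,\langle a\rangle^{-s}\,dE_{1,2}.
\]
By Theorem~\ref{measure} we replace $dE_{1,2}$ by $\tfrac12\,d\mu$, so everything reduces to evaluating $\int_{\zz_p^*}\chi\omega^{-1}(a)\langle a\rangle^{-s}\,d\mu$ as an explicit limit.

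The second step is to express this integral as the limit of its Riemann sums over the standard partitions of $\zz_p^*$. For a fixed $s\in\zz_p$ the integrand $a\mapsto\chi\omega^{-1}(a)\langle a\rangle^{-s}$ is continuous on the compact space $\zz_p^*$: the factor $\chi\omega^{-1}$ is locally constant because $\chi$ has $p$-power conductor, and $\langle a\rangle^{-s}=\exp_p\!\bigl(-s\log_p\langle a\rangle\bigr)$ depends continuously on $\langle a\rangle\in1+p\zz_p$, hence on $a$. Since $\mu$ is a bounded ($\{\pm1\}$-valued) $\zz_p$-valued measure, the integral equals $\lim_{n}\sum_{U}f(x_U)\,\mu(U)$ over the partition of $\zz_p^*$ into the clopen balls $U=a+p^n\zz_p$ with $1\le a\le p^n$ and $p\nmid a$, for any choice of representatives $x_U\in U$. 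Taking $x_U=a$ and using $\mu(a+p^n\zz_p)=(-1)^{\{a\}_{p^n}}=(-1)^{a}$ — the index $a=p^n$ does not occur since $p\mid p^n$, and for every remaining index $\{a\}_{p^n}=a$ — yields exactly $\lim_{n\to\infty}\sum_{a=1,\,p\nmid a}^{p^n}\tfrac{(-1)^a}{2}\,\chi\omega^{-1}(a)\langle a\rangle^{-s}$. Reattaching the prefactor with $c=2$ gives the first formula.

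For the analytic part I would appeal to the classical theory of $p$-adic $L$-functions (see \cite{washington}): for $\chi$ of $p$-power conductor, $s\mapsto L_p(s,\chi)$ is analytic on $\zz_p$ (even on a larger disc) and has a simple pole at $s=1$, with residue $1-\tfrac1p$, only in the stated exceptional case. One sees this directly from the formula as well: expanding $\langle a\rangle^{-s}=\sum_{k\ge0}\binom{-s}{k}(\langle a\rangle-1)^k$ with $\langle a\rangle-1\in p\zz_p$ and integrating termwise against the bounded measure $\mu$ shows that $\int_{\zz_p^*}\chi\omega^{-1}(a)\langle a\rangle^{-s}\,d\mu$ is an everywhere-convergent Iwasawa-type analytic function of $s$, so the only possible singularity of $L_p(s,\chi)$ is the simple pole at $s=1$ contributed by the rational factor $1/(1-\chi(2)\langle 2\rangle^{-s+1})$. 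The two $\zeta_{p,i}$ identities are then the specialization $\chi=\omega^{1-i}$, under which $\chi\omega^{-1}=\omega^{-i}$ and $\chi(2)=\omega(2)^{1-i}$.

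There is no deep obstacle once Theorem~\ref{measure} is in hand; the substantive input is that theorem, and the rest is bookkeeping. The one point needing care is the passage from $\int_{\zz_p^*}(\cdot)\,d\mu$ to the concrete sum: one must check that the integrand is genuinely continuous on all of $\zz_p^*$ for each $s\in\zz_p$ (so that the Riemann sums converge, independently of the representatives chosen) and that the index set $1\le a\le p^n$, $p\nmid a$, together with the signs $(-1)^a$, is precisely the evaluation of $\mu$ on the standard partition of $\zz_p^*$. Everything else is direct substitution.
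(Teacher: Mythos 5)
Your derivation is exactly the paper's (implicit) argument: specialize the quoted Mellin-transform formula to $c=2$ (admissible since $p>2$), substitute $E_{1,2}=\tfrac12\mu$ from Theorem~\ref{measure}, and unwind the integral as the limit of Riemann sums over the balls $a+p^n\zz_p$, $1\le a\le p^n$, $p\nmid a$, where $\mu$ takes the value $(-1)^a$; the analyticity and pole statement is, as in the paper, imported from the classical theory, and the $\zeta_{p,i}$ formulas are the specialization $\chi=\omega^{1-i}$. The only loose point is your supplementary ``direct'' pole argument: the denominator $1-\chi(2)\langle 2\rangle^{-s+1}$ vanishes at $s=1$ whenever $\chi(2)=1$, which can happen for characters other than the exceptional one, so excluding a pole in those cases still requires the cited classical result (or independence of the construction from $c$); since you rely primarily on the citation, this does not affect correctness.
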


\begin{remark}
The integral representation of $p$-adic $L$-functions using measures  and Iwasawa's construction using Stickelberger elements (see \cite{iwasawa}) suggest  that a Dirichlet series expansion is possible. However, the exact coefficients of $\langle a \rangle^{-s}$ are not obvious, even for the $p$-adic zeta function.
We see that the complex and the $p$-adic expansions are surprisingly similar. In the $p$-adic case, we have to look at the above subsequence of the series since $|\langle a \rangle^{-s}|_p=1$.

Dirichlet series expansions were studied by D. Delbourgo  in \cite{delbourgo2006dirichlet},  \cite{delbourgo2009}. 
He considers Dirichlet characters $\chi$ with $\gcd(p, 2 f_{\chi} \phi(f_{\chi}))=1$ and their Teichm\"uller twists. 
 We obtain similar results, but consider the case $f_{\chi} = p^m$ and use other methods ($p$-adic measures).  For the $p-1$ branches of the $p$-adic zeta function, we obtain the same expansions. The values $\mp \frac{1}{2}$ of the scaled alternating measure $E_{1,2}= \frac{1}{2} \mu$ can be found as coefficients $a_1(\chi)$ and $a_2(\chi)$ in the first row of Table 1 in \cite{delbourgo2009}.
 \end{remark}

\section*{Acknowledgments}
The author wishes to thank Daniel Delbourgo for hints to his work and helpful conversations.

\bibliography{nsabib}

\begin{thebibliography}{1}

\bibitem{delbourgo2006dirichlet}
Daniel Delbourgo.
\newblock {A Dirichlet series expansion for the p-adic zeta-function}.
\newblock {\em Journal Australian Mathematical Society}, 81:215--224, 2006.

\bibitem{delbourgo2009}
Daniel Delbourgo.
\newblock {The convergence of Euler products over p-adic number fields}.
\newblock {\em Proceedings of the Edinburgh Mathematical Society}, 52:583--606,
  2009.

\bibitem{iwasawa}
Kenkichi Iwasawa.
\newblock {On p-adic L-functions}.
\newblock {\em Annals of Mathematics}, 89:198--205, 1969.

\bibitem{koblitz}
Neal Koblitz.
\newblock {\em p-adic Numbers, p-adic Analysis and Zeta-Functions}, volume~58
  of {\em Graduate Texts in Mathematics}.
\newblock Springer, 1984.

\bibitem{kubota}
Tomio Kubota and Heinrich~W. Leopoldt.
\newblock {Eine p-adische Theorie der Zetawerte. Teil I: Einf\"uhrung der
  p-adischen Dirichletschen L-Funktionen}.
\newblock {\em Journal f{\"u}r die reine und angewandte Mathematik},
  214:328--339, 1964.

\bibitem{washington}
Lawrence~C. Washington.
\newblock {\em Introduction to Cyclotomic Fields}, volume~83 of {\em Graduate
  Texts in Mathematics}.
\newblock Springer, 1997.

\end{thebibliography}

\bibliographystyle{plain}

\end{document}